\newtheorem{thm}{Theorem}[section]
\newtheorem{prop}[thm]{Proposition}
\newtheorem{lem}[thm]{Lemma}
\newtheorem{conj}[thm]{Conjecture}
\newtheorem{rem}[thm]{Remark}
\newtheorem*{defn}{Definition}
\newcommand{\os}{Odlyzko and Stanley}
\newcommand{\iss}{independent Stanley sequence}
\newcommand{\lb}{\lambda}
\newcommand{\om}{\omega}
\newcommand{\al}{\alpha}
\newcommand{\ep}{\epsilon}
\newcommand{\kp}{\kappa}
\newcommand{\bs}{\backslash}
\title{On the growth of Stanley sequences}
\author{David Rolnick}
\thanks{Massachusetts Institute of Technology, Cambridge MA. Email: \texttt{drolnick@math.mit.edu}}
\author{Praveen S. Venkataramana}
\thanks{Massachusetts Institute of Technology, Cambridge MA. Email: \texttt{venkap@mit.edu}}
\begin{document}

\begin{abstract}
A set is said to be \emph{3-free} if no three elements form an arithmetic progression. Given a 3-free set $A$ of integers $0=a_0<a_1<\cdots<a_t$, the \emph{Stanley sequence} $S(A)=\{a_n\}$ is defined using the greedy algorithm: For each successive $n>t$, we pick the smallest possible $a_n$ so that $\{a_0,a_1,\ldots,a_n\}$ is 3-free and increasing. Work by Odlyzko and Stanley indicates that Stanley sequences may be divided into two classes. Sequences of Type 1 are highly structured and satisfy $\al n^{\log_2 3}/2\le a_n\le \al n^{\log_2 3}$, for some constant $\al$, while those of Type 2 are chaotic and satisfy $\Theta(n^2/\log n)$. In this paper, we consider the possible values for $\alpha$ in the growth of Type 1 Stanley sequences. Whereas Odlyzko and Stanley assumed $\alpha=1$, we show that $\alpha$ can be any rational number which is at least 1 and for which the denominator, in lowest terms, is a power of 3.
\end{abstract}

\maketitle

\section{Introduction}

Sets without arithmetic progressions are a perennially interesting topic in mathematics.  In 1953, Roth \cite{roth} proved that any infinite set of integers with linear density must contain an arithmetic progression (AP). Sanders \cite{sanders} recently improved this result to density $\Omega(n/\log^{1-o(1)} n)$.

A set without 3-term APs is called a \emph{3-free set}. Odlyzko and Stanley introduced the natural idea of constructing 3-free sets by the greedy algorithm, starting with some finite set of elements. Suppose that $A$ is a finite 3-free set of nonnegative integers $0=a_0 < a_1 < \cdots < a_t$. The \emph{Stanley sequence of $A$}, denoted $S(A) = \{a_n\}$ of $A$, is the infinite sequence of nonnegative integers defined recursively such that for $n> t$, we pick $a_n>a_{n-1}$ to be the smallest integer for which the set $\{a_0,a_1,\ldots,a_n\}$ is 3-free.  For simplicity we will often denote $S(\{a_0,\ldots,a_t\})$ by $S(a_0,\ldots,a_t)$.

The simplest Stanley sequence is $S(0)=0,1,3,4,9,10,12,13,27,\ldots$. It is simple to show that the $n$th term of this sequence is the number obtained by writing $n$ in binary and interpreting it in ternary. In particular, the $2^k$th term is $3^k$. \os\ \cite{stanley} found equally explicit expressions, involving ternary digits, for $S(0,3^m)$ and $S(0,2\cdot 3^m)$.

The asymptotic growth of Stanley sequences remains poorly understood. It has been conjectured that Stanley sequences fall into two classes, of which the first class are highly structured and grow slowly, while the second class appear chaotic and grow more quickly.

\begin{conj}[based on work by Odlyzko and Stanley \cite{stanley}]
Every Stanley sequence $S(A)=\{a_n\}$ follows one of two patterns of asymptotic growth:

\emph{Type 1:} $\al n^{\log_2 3}/2\le a_n\le \al n^{\log_2 3}$, where $\al$ is a constant, or

\emph{Type 2:} $a_n=\Theta(n^2/\log n)$.

\label{conj:growth}
\end{conj}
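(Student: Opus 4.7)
The plan is to handle the two conjectured regimes separately, after first identifying a structural feature that distinguishes them. For the simplest Type 1 example $S(0)$, the $n$th term is the ternary interpretation of $n$'s binary expansion, so $a_{2^k}=3^k$ and the bounds $\al n^{\log_2 3}/2\le a_n\le \al n^{\log_2 3}$ (with $\al=1$) follow by interpolating between consecutive powers of $2$. Since \os's explicit formulas for $S(0,3^m)$ and $S(0,2\cdot 3^m)$ exhibit the same ``doubling in length, tripling in value'' phenomenon, the natural plan is to isolate those initial sets $A$ for which $S(A)$ enjoys an analogous recursive structure, label these Type 1, and extract the growth bounds from the recursion.

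For the Type 1 analysis, I would aim for a doubling lemma of the following shape: if $S(A)=\{a_n\}$ is of Type 1 and $a_{n-1}$ ends a ``complete block,'' then $a_n=2a_{n-1}+c$ for a bounded constant $c=c(A)$, and the next block is a translate of $\{a_0,\ldots,a_{n-1}\}$ by $a_n$. Inductively, blocks of length $2^k$ would appear with largest element close to $\al\cdot 3^k$ for a constant $\al$ determined by $A$, which is exactly the statement $a_n\asymp \al n^{\log_2 3}$. The inductive step reduces to checking that 3-freeness is preserved under the block translation, which looks like exactly the content of the paper's later machinery of \iss s together with their \sois\ and \rois .

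For Type 2, the greedy algorithm produces a chaotic set in which each new term must avoid being the third term of a 3-AP with any pair of previous terms. A heuristic counting argument predicts $a_n\asymp n^2/\log n$: among the $a_n$ candidate integers up to the next term there are $\binom{n}{2}$ forbidden residues, with a logarithmic saving coming from overlaps between different forbidden positions. Making this rigorous for the upper bound would require showing that the forbidden set has positive density among the candidates (an Erd\H{o}s--Lindstr\"om-style argument), while the matching lower bound would need a Behrend-type construction realized along the greedy trajectory rather than abstractly.

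The main obstacle is the dichotomy itself: \emph{a priori} there is no reason the asymptotics of the greedy algorithm should cluster around only these two regimes, and no obvious mechanism rules out an intermediate growth rate such as $a_n\sim n^{3/2}$. Producing a structural criterion that cleanly separates Type 1 from Type 2---and proving that every initial set falls on one side or the other---is where any attempt is likely to stall, and is essentially where \os's original analysis left the problem. A realistic concrete target is therefore to prove the two growth bounds \emph{conditional} on the appropriate structural hypothesis (existence of the doubling pattern for Type 1, its persistent failure for Type 2), leaving the unconditional dichotomy as a deep open question and explaining why the present paper studies only the coefficient $\al$ assuming Type 1 behavior.
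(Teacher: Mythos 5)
This statement is a conjecture, not a theorem, and the paper offers no proof of it; Section~4 says explicitly that ``it appears very hard to prove that every Stanley sequence follows either Type 1 or Type 2 growth.'' Your proposal correctly recognizes this and correctly locates the essential obstruction in the dichotomy itself, which is exactly where the literature stands. Your sketch of the Type~1 side also matches the paper's actual machinery: the definition of an \iss\ encodes precisely the block-doubling recursion you posit, namely $a_{2^k+i}=a_{2^k}+a_i$ together with $a_{2^k}=2a_{2^k-1}+1-\lambda$, from which $a_{2^k}=\alpha\cdot 3^k$ follows and the bounds $\alpha n^{\log_2 3}/2\le a_n\le \alpha n^{\log_2 3}$ come by interpolating on $[2^k,2^{k+1})$, as in Proposition~\ref{prop:scaling}. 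The paper then proves Type~1 growth \emph{conditionally} (for independent and dependent sequences) and studies the achievable values of $\alpha$; it does not attempt the unconditional dichotomy, so there is no paper proof to compare against.

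One concrete error in your Type~2 sketch: a Behrend-type construction produces \emph{dense} 3-free sets, which would make $a_n$ \emph{small}, on the order of $n\exp(c\sqrt{\log n})$, and so would \emph{contradict} Type~2 growth rather than establish its lower bound. The lower bound $a_n=\Omega(n^2/\log n)$ is an anti-concentration statement about the greedy trajectory for which no mechanism is known; the strongest general density result cited (Sanders) only gives $a_n=\Omega(n\log^{1-o(1)}n)$, and the upper bound side, via Moy, only gives $a_n\le n^2/(2+\epsilon)$ without the logarithmic factor. Both Type~2 bounds are therefore genuinely open, and the probabilistic argument of \os\ remains heuristic, exactly as you anticipate.
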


Odlyzko and Stanley \cite{stanley} considered Type 1 sequences only in the case $\al=1$. They showed that $S(0)$, $S(0,3^m)$, and $S(0,2\cdot 3^m)$ all follow this asymptotic growth pattern, with $a_{2^k}=3^k$ for large enough $k$. Likewise, Erd\H{o}s et al.~\cite{erdos} found that the sequence $S(0,1,4)$ satisfies $a_{2^k}=3^k + 2^{k-1}$ if $k\ge 2$. However, Rolnick \cite{rolnick} demonstrated that many Stanley sequences follow Type 1 growth for other values of $\al$, including the sequence $S(0,1,7)$, for which $a_{2^k}=(10/9)\cdot 3^k$, thus $\al=10/9$. Given a Type 1 sequence, we say that $\al$ is its \emph{scaling factor}.

By contrast, no Stanley sequence has been proven conclusively to satisfy Type 2 growth. Empirical observations by Lindhurst \cite{lindhurst} suggest that the sequence $S(0,4)$ does indeed follow this pattern of growth. A probabilistic argument by Odlyzko and Stanley \cite{stanley} indicates that a ``random'' Stanley sequence should be of Type 2, but does not prove that any actual Stanley sequence is of this form.

In a recent paper, Moy \cite{moy} solved a problem posed by Erd\H{o}s et al.~\cite{erdos}, showing that every Stanley sequence $\{a_n\}$ satisfies $a_n\le n^2/(2+\ep)$ for large enough $n$. Another question in \cite{erdos} was resolved by Savchev and Chen \cite{savchev}, who constructed a sequence $\{a_n\}$ for which $\lim_{n\to \infty} (a_{n+1}-a_n)=\infty$ and such that $\{a_n\}$ is a \emph{maximal} 3-free set, that is, a 3-free set that is not a proper subset of any other 3-free set. A related problem of \cite{erdos} remains open, finding a Stanley sequence $\{a_n\}$ for which $\lim_{n\to \infty} (a_{n+1}-a_n)=\infty$.

Results by Rolnick \cite{rolnick} imply that the scaling factor $\al$ of a Type 1 Stanley sequence may be arbitrarily high. In this paper, we prove a stronger result. Suppose that $\al$ is any rational number for which the denominator in lowest terms is a power of 3. Then, there exists a Stanley sequence with scaling factor $\al$.

We also consider the \emph{repeat factor}, which intuitively is the value $a_n$ at which a sequence $S(A)$ begins to exhibit Type 1 behavior. We demonstrate that every sufficiently large integer is the repeat factor for some Type 1 sequence.

\section{Preliminaries}

Building on the work of \os\ \cite{stanley}, Rolnick \cite{rolnick} introduced the notion of independent Stanley sequences. A related notion, that of dependent sequences, will not be relevant to this paper.  It can be shown that a Stanley sequence is of Type 1 if it is independent or dependent, and Rolnick conjectured that the converse also holds.

\begin{defn}
A Stanley sequence $S(A)=\{a_n\}$ is \emph{independent} if there exist constants $\lb=\lb(A)$ and $\kappa=\kappa(A)$ such that for all $k\ge \kappa$,

\begin{itemize}
\item$a_{2^k + i} = a_{2^k} + a_i$ when $0\le i < 2^k$, and
\item$a_{2^k} = 2a_{2^k - 1} + 1 - \lambda$.
\end{itemize}
\end{defn}

It is simple to verify that the sequences $S(0)$, $S(0,3^k)$, and $S(0,2\cdot 3^k)$ are independent. Rolnick identified several more general classes of independent Stanley sequences (see Theorems 1.2 and 1.4 of \cite{rolnick}).

When $k\ge \kappa$, it follows from the definition of an independent Stanley sequence that

$$a_{2^{k+1}} = 2a_{2^{k+1}-1}-\lambda+1 = 2(a_{2^k}+a_{2^k-1})-\lambda+1 = 3a_{2^k}.$$

Hence, for an independent Stanley sequence $S(A)$ there exists a positive number $\al=\al(A)$ such that for sufficiently large $k$, $$a_{2^k} = \al\cdot 3^k.$$We say that $\al$ is the \emph{scaling factor} of $S(A)$. The following proposition shows that each term of $S(A)$ is approximately $\al$ times the corresponding term of $S(0)$. In particular, this implies that independent Stanley sequences follow Type 1 growth.

\begin{prop}
\label{prop:scaling}
Let $S(A)=\{a_n\}$ be a Stanley sequence. Then, $S(A)$ is independent if and only if for every $n$,

\begin{equation}a_n = \al s_n + b_n,\label{eq:scaling}\end{equation}

\noindent where $\al$ is a constant, $S(0)=\{s_n\}$, and $\{b_n\}$ is a periodic sequence with period $2^\kappa$. If (\ref{eq:scaling}) holds, then $\al$ is the scaling factor of $S(A)$ and $\kappa=\kappa(A)$.
\end{prop}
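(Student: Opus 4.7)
The plan is to prove both directions by term-by-term comparison with $S(0)=\{s_n\}$, which itself satisfies $s_{2^k+i}=3^k+s_i$ for every $k\ge 0$ and $0\le i<2^k$ (so in the terminology of the definition, $S(0)$ is independent with $\kp=0$ and $\lb=0$). This basic recursion is what lets the scaling factor $\al$ be separated cleanly from a residual periodic correction term.

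For the forward direction, assume $S(A)$ is independent with constants $\lb$ and $\kp$, and let $\al=a_{2^\kp}/3^\kp$ be its scaling factor. Define $b_n=a_n-\al s_n$; I want to show that $\{b_n\}$ has period $2^\kp$. For any $k\ge\kp$ and $0\le i<2^k$, combining $a_{2^k+i}=a_{2^k}+a_i$, $a_{2^k}=\al\cdot 3^k$, and $s_{2^k+i}=3^k+s_i$ gives
\[
b_{2^k+i}=(a_{2^k}+a_i)-\al(3^k+s_i)=a_i-\al s_i=b_i.
\]
For any $n\ge 2^\kp$, I would iterate this by stripping off the highest-order bit of $n$; each strip removes a power $2^k\ge 2^\kp$, preserves the residue $n\bmod 2^\kp$, and terminates once the remainder drops below $2^\kp$. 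This gives $b_n=b_{n\bmod 2^\kp}$, which is the required periodicity.

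For the backward direction, assume $a_n=\al s_n+b_n$ with $\{b_n\}$ periodic of period $2^\kp$. Since $a_0=s_0=0$, we have $b_0=0$, and because $2^\kp\mid 2^k$ whenever $k\ge\kp$, periodicity forces $b_{2^k}=0$ and hence $a_{2^k}=\al\cdot 3^k$; this identifies $\al$ as the scaling factor. The first independence relation then drops out of the calculation
\[
a_{2^k+i}=\al s_{2^k+i}+b_{2^k+i}=\al(3^k+s_i)+b_i=a_{2^k}+a_i,
\]
where $b_{2^k+i}=b_i$ again uses $2^\kp\mid 2^k$. For the second relation, using $s_{2^k-1}=(3^k-1)/2$ and $b_{2^k-1}=b_{2^\kp-1}$, a short computation evaluates $2a_{2^k-1}+1-a_{2^k}$ as the $k$-independent constant $\lb=1-\al+2b_{2^\kp-1}$.

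There is no genuine difficulty in this proof; the only point requiring care is confirming that the threshold $\kp$ plays the same role on both sides. This is automatic, since the conditions ``$k\ge\kp$'' in the definition of independence and ``$2^k\equiv 0\pmod{2^\kp}$'' in the periodicity statement are literally the same constraint, so the value of $\kp$ transfers cleanly between the two formulations.
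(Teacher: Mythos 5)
Your proof is correct and follows essentially the same route as the paper: the forward direction defines $b_n = a_n - \al s_n$ and establishes periodicity via the independence recurrence together with $s_{2^k+i} = 3^k + s_i$ (your bit-stripping iteration is just the paper's induction on $k$ unrolled), and the backward direction inverts the same identities. One small bonus: your closed form $\lb = 1 - \al + 2b_{2^\kp-1}$, obtained by using $b_0 = 0$, is actually the corrected version of the paper's expression, which has a stray factor of $\al$ on the $b_{2^\kp-1}$ term.
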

\begin{proof}
We first prove that if $S(A$) is independent then (\ref{eq:scaling}) holds.

Let $\{b_n\}$ be the sequence in which the values $\{a_0-\al s_0,a_1-\al s_1,\ldots,a_{2^\kappa-1}-\al s_{2^\kappa-1}\}$ repeat periodically. Pick $k\ge \kappa$, and suppose towards induction that $a_n=\al s_n+b_n$ for all $n<2^k$. The base case of $k=\kappa$ holds from the definition of the sequence $\{b_n\}$.

For each $n<2^k$, we have \begin{align*}a_{n+2^k}&=a_{2^k}+a_n\\ &=\al s_{2^k}+(\al s_n+b_n)\\ &=\al(s_{2^k}+s_n)+b_n\\ &=\al s_{2^k+n}+b_{2^k+n}.\end{align*}The last step follows since $b_n$ is periodic. Therefore, $a_n=\al s_n+b_n$ for all $n<2^{k+1}$, completing the induction. One last check remains: The period of $\{b_n\}$ might be some proper divisor of $2^\kappa$. However, it is easily verified that this implies $\kp(A)<\kp$; hence the period of $\{b_n\}$ is indeed $S^\kp$.

Now, assume that (\ref{eq:scaling}) holds. Observe that for $k\ge \kappa$, and $0\le i<2^k$,\begin{align*}a_{2^k+i}&=\al s_{2^k+i}+b_{2^k+i}\\ &=\al s_{2^k}+\al s_i+b_{2^k}+b_i=\\ &=a_{2^k}+a_i\\ \text{and}\hskip .25 in a_{2^k}&=\al s_{2^k}+b_{2^k}\\ &=\al(2s_{2^k-1}+1)+b_{2^\kappa}\\ &=2\al(s_{2^k-1}+b_{2^k-1})-(2\al b_{2^\kappa-1}-b_{2^\kappa}-\al+1)+1.\end{align*}Thus, we see that $S(A)$ is independent with $\lb(A)=2\al b_{2^\kappa-1}-b_{2^\kappa}-\al+1$. Since the period of $\{b_n\}$ is $2^\kappa$ and not a proper divisor, it follows that $\kappa$ is the minimum $k$ for which the independence conditions hold; hence $\kappa=\kappa(A)$.
\end{proof}

Given an independent Stanley sequence $S(A)$, we define the \emph{repeat factor} $\rho(A):=a_{2^\kappa}=\al\cdot 3^\kappa$. Thus,

\begin{itemize}
\item{$\rho(\{0\}) = 1$;}
\item{$\rho(\{0,3^{k}\}) = \rho(\{0,2\cdot 3^{k-1}\}) = 3^{k+1}$.}
\end{itemize}

\begin{prop}
\label{prop:repeat}
A Stanley sequence $S(A)=\{a_n\}$ is independent if and only if

\begin{equation}\{a_n\}=\left\{\rho x+y\mid x\in S(0),\, y\in \{a_0,a_1,\ldots,a_{2^\kappa-1}\}\right\}\label{eq:repeat}\end{equation}

\noindent where $\rho$ and $\kappa$ are constants. If (\ref{eq:repeat}) holds, then $\rho$ is the repeat factor of $S(A)$ and the minimum value of $\kappa$ equals $\kappa(A)$.
\end{prop}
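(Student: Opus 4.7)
The plan is to deduce Proposition~\ref{prop:repeat} directly from Proposition~\ref{prop:scaling}, using the explicit binary-to-ternary description of $S(0)$. The single arithmetic fact I will need is that if $n=2^\kappa m+r$ with $0\le r<2^\kappa$, then splitting the binary expansion of $n$ at position $\kappa$ gives
\[s_n=3^\kappa s_m+s_r.\]

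For the forward direction, I assume $S(A)$ is independent and invoke Proposition~\ref{prop:scaling} to obtain $\al$ and a sequence $\{b_n\}$ of period $2^\kappa$ with $a_n=\al s_n+b_n$. Writing $n=2^\kappa m+r$ as above, periodicity gives $b_n=b_r$, so
\[a_n=\al(3^\kappa s_m+s_r)+b_r=(\al\cdot 3^\kappa)s_m+(\al s_r+b_r)=\rho s_m+a_r,\]
where I set $\rho=\al\cdot 3^\kappa$. As $n$ ranges over $\mathbb{Z}_{\ge 0}$, the pair $(m,r)$ ranges over $\mathbb{Z}_{\ge 0}\times\{0,\ldots,2^\kappa-1\}$, which yields (\ref{eq:repeat}). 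Taking $m=1$, $r=0$ gives $a_{2^\kappa}=\rho s_1+a_0=\rho$, confirming that $\rho$ matches the defined repeat factor $\rho(A)=a_{2^\kappa}$.

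For the reverse direction, given (\ref{eq:repeat}), I set $\al=\rho/3^\kappa$ and define $b_n=a_n-\al s_n$ for $0\le n<2^\kappa$, extending periodically. For an arbitrary $n=2^\kappa m+r$, combining the assumption with the $s_n$ identity gives
\[a_n=\rho s_m+a_r=\al\cdot 3^\kappa s_m+\al s_r+b_r=\al s_n+b_n,\]
so Proposition~\ref{prop:scaling} forces $S(A)$ to be independent with scaling factor $\al$.

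The main bookkeeping obstacle is checking that the set on the right-hand side of (\ref{eq:repeat}) actually reconstructs the sequence $\{a_n\}$ in the correct order, so that each nonnegative integer $n$ does correspond to a unique pair $(m,r)$. This reduces to verifying $\rho>a_{2^\kappa-1}$, which is immediate once I know $\rho=a_{2^\kappa}$ together with strict monotonicity of $\{a_n\}$. The minimality assertion — that the smallest $\kappa$ for which (\ref{eq:repeat}) holds equals $\kappa(A)$ — is handled in parallel with the corresponding step of Proposition~\ref{prop:scaling}: a smaller admissible $\kappa$ in (\ref{eq:repeat}) would produce $\{b_n\}$ with period a proper divisor of $2^{\kappa(A)}$, contradicting the minimality established there.
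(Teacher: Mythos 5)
Your route — deducing Proposition~\ref{prop:repeat} from Proposition~\ref{prop:scaling} by way of the digit-splitting identity $s_{2^\kappa m+r}=3^\kappa s_m+s_r$ — is genuinely different from what the paper has in mind. The paper simply repeats the induction of Proposition~\ref{prop:scaling} for the new statement, whereas you extract both directions as formal consequences of the earlier proposition plus one arithmetic fact about $S(0)$. That is a cleaner and more illuminating argument, and the forward direction is complete and correct as written.

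There is, however, a real gap in the reverse direction. You correctly identify that the key bookkeeping step is to show the parameterization is order-preserving, i.e.\ that the $n$-th element of the right-hand side of (\ref{eq:repeat}) in increasing order is $\rho s_m+a_r$ with $n=2^\kappa m+r$, and that this reduces to the inequality $\rho>a_{2^\kappa-1}$. But your justification for that inequality — ``immediate once I know $\rho=a_{2^\kappa}$'' — is circular in this direction: $\rho=a_{2^\kappa}$ is exactly what the order-preserving parameterization would give you, so you cannot assume it first. Before you have the ordering, the hypothesis (\ref{eq:repeat}) is only a set equality, and it does not by itself rule out $\rho$ being some small element of $S(A)$. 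You need an independent argument, and fortunately a short one exists: since $\{a_n\}$ is a Stanley sequence it is 3-free. If $\rho=a_j$ for some $1\le j<2^\kappa$, then taking $(m,r)=(1,j)$ in the right-hand side of (\ref{eq:repeat}) puts $\rho\cdot s_1+a_j=2\rho$ in $\{a_n\}$, so $0,\rho,2\rho$ would be a 3-term AP inside $\{a_n\}$, a contradiction; and $\rho\ne 0$ since otherwise the right-hand side would be a finite set. Since $\rho=\rho s_1+a_0$ does lie in $\{a_n\}$, this forces $\rho\ge a_{2^\kappa}>a_{2^\kappa-1}$, which is exactly what you need. With that lemma inserted, your proof goes through.
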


The proof of Proposition \ref{prop:repeat} follows from a straightforward induction argument similar to that of Proposition \ref{prop:scaling}.

Just as the scaling factor of an independent Stanley sequence measures how the sequence behaves asymptotically, so the repeat factor measures how fast the sequence converges to its asymptotic behavior. In this paper, we consider the set of rational numbers that are scaling factors and the set of integers that are repeat factors.

We define a \emph{triadic number} to be a rational number for which the denominator, in lowest terms, is a power of $3$.

\begin{thm}
\label{thm:main} (i) Every sufficiently large integer $\rho$ is a repeat factor. (ii) Every triadic number $\al \ge 1$ is a scaling factor.
\end{thm}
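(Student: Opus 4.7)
The plan is to prove part (ii) by an explicit construction and deduce part (i) as a consequence. Proposition \ref{prop:repeat} is the central tool: it reduces the problem of building an \iss\ to exhibiting a repeat factor $\rho$ together with a compatible initial block $\{a_0,\ldots,a_{2^\kp-1}\}\subset\{0,1,\ldots,\rho-1\}$ such that the greedy Stanley algorithm on a suitable initial subset actually generates this block.

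For part (ii), fix a triadic $\al=p/3^m\ge 1$. I would choose $\kp\ge m$ large enough to afford room for the construction, set $\rho=\al\cdot 3^\kp=p\cdot 3^{\kp-m}$, and write down an explicit 3-free set $B=\{0=b_0<b_1<\cdots<b_{2^\kp-1}\}\subset\{0,1,\ldots,\rho-1\}$. The most natural template, generalizing the characterization of $S(0)$ as ``nonnegative integers whose ternary representation uses only the digits $0$ and $1$,'' is to index the elements of $B$ by length-$\kp$ binary strings and let each bit contribute a digit value drawn from a recipe governed by the ternary digits of $p$. Two things must then be verified: (a) the full set $\{\rho x+y:x\in S(0),\,y\in B\}$ is 3-free, which should reduce to the 3-freeness of $S(0)$ and of $B$ combined with the buffering $\rho$ affords; and (b) beginning from a small subset of $B$, the greedy algorithm selects exactly the elements of $B$ in order and then continues periodically. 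By Proposition \ref{prop:repeat}, (a) and (b) together certify independence with repeat factor $\rho$ and scaling factor $\al$.

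For part (i), let $\rho$ be a sufficiently large integer and factor $\rho=p\cdot 3^j$ with $\gcd(p,3)=1$. Any integer $m$ with $0\le m\le \lfloor\log_3 p\rfloor$ gives a triadic $\al=p/3^m\ge 1$ for which $\rho=\al\cdot 3^{j+m}$, and applying the construction of part (ii) to $\al$ (with $\kp=j+m$) produces an \iss\ whose repeat factor is exactly $\rho$. The ``sufficiently large'' hypothesis is what guarantees that $p$ is large enough for at least one permissible $m$ to fall in the regime where the construction is valid.

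The main obstacle will be step (b): verifying that the greedy algorithm in fact selects the elements of $B$ in the prescribed order. Concretely, at each stage one must rule out every smaller candidate by exhibiting a 3-term AP that it would complete with two previously chosen elements. I expect this to require an induction across nested blocks of length $2^k$ together with a careful case analysis on the digit structure of the candidates, exploiting the block geometry provided by $\rho$.
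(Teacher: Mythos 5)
Your plan diverges from the paper's in two essential ways, and the second of these is a genuine gap.

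First, for part (ii) you propose an explicit ``ternary-digit'' construction of the initial block $B$, generalizing the known formulas for $S(0)$, $S(0,3^m)$, $S(0,2\cdot 3^m)$. The paper does not do this; it instead builds new independent sequences iteratively out of old ones. The engine is a proposition stating that, for any independent $S(A)$ and suitable parameters $k$ and $d$, the set
$$A^d_k = A\cup (a_{2^k}+A)\cup(7a_{2^k}-d+A)\cup(8a_{2^k}-d+A)$$
generates an independent sequence with scaling factor $\tfrac{10}{9}\alpha(A)-\tfrac{d}{3^{k+2}}$ and repeat factor $10a_{2^k}-d$. This interpolates between the products $A\otimes_k\{0,1,6,7\}$ and $A\otimes_k\{0,1,7,8\}$, and letting $d$ run over its allowed range sweeps out an interval of scaling factors $[\alpha,\tfrac{10}{9}\alpha)$. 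Iterating from $S(0)$ gives all triadic $\alpha\ge 1$. Your explicit-formula route would require carrying out exactly the hard verification (your ``step (b)'') that the paper's construction cleverly avoids: the paper certifies that the candidate block is a prefix of a Stanley sequence by showing it is 3-free and that it covers the right interval, and then invokes Lemma \ref{lem:kappa} — it never needs to chase the greedy algorithm term by term. Your proposal explicitly acknowledges this as the main obstacle and leaves it unresolved, so as written it is not a proof of (ii).

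Second, and more seriously, your deduction of (i) from (ii) does not work. You take $\rho=p\cdot 3^j$, choose $m$, set $\alpha=p/3^m$, and then claim to run the construction ``with $\kappa=j+m$'' to get repeat factor $\alpha\cdot 3^{j+m}=\rho$. But $\kappa(A)$ is not a free parameter: by Proposition \ref{prop:repeat} it is the \emph{minimum} $\kappa$ for which the block description holds, and the repeat factor is $\alpha\cdot 3^{\kappa(A)}$ for that minimal $\kappa$. If you instantiate your construction with a larger block that happens to be the $2^\kappa$-term prefix of a sequence with smaller $\kappa(A)$, the repeat factor collapses. A concrete example: taking $\alpha=1$ and the block $B=\{s_0,\ldots,s_{2^\kappa-1}\}$ (the first $2^\kappa$ terms of $S(0)$) gives back $S(0)$ with $\rho(\{0\})=1$, not $3^\kappa$. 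So you cannot in general force $\rho=\alpha\cdot 3^{j+m}$, and part (i) requires its own argument about repeat factors. In the paper this is part (b) of Proposition \ref{prop:epsilon}, which is proved in parallel with the scaling-factor statement — both come from the same Proposition \ref{prop:main}, but neither is deduced from the other. The intervals $[3^k(1+\epsilon)^n\rho,\,3^k(10/9-\epsilon)^n\rho]$ are iterated until consecutive ones overlap and cover all sufficiently large integers.

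In short: your (ii) is a plausible but unexecuted sketch using a different method, and your (i) relies on a false assumption that $\kappa$ can be prescribed. If you want to salvage the plan, you would need a version of the construction that certifies the \emph{minimality} of $\kappa$, which is precisely what the paper's covering lemmas and Lemma \ref{lem:kappa} accomplish.
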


\section{Main result}

To prove Theorem \ref{thm:main}, we develop a construction for independent Stanley sequences that allows us carefully to control the scaling factor and repeat factor. We begin with several lemmas.

We say that an integer $x$ is \emph{covered} by a set $S$ if there is a 3-term AP of the form $y,z,x$ with $y<z<x$ and $y,z\in S$. Likewise, we say an integer $x$ is \emph{jointly covered} by sets $S$ and $T$ if there is a 3-term AP of the form $y,z,x$ with $y<z<x$, such that $y\in S$ and $z\in T$. Given a Stanley sequence $S(A)$, let $O(A)$ be the set of nonnegative integers neither in $S(A)$ nor covered by it. By the definition of a Stanley sequence, $O(A)$ must be a finite set.  We let $\omega(A)$ be the maximum element of $O(A)$.

\begin{lem}[Rolnick \cite{rolnick}]
\label{lem:kappa}
Let $S(A) = \{a_n\}$ be a Stanley sequence. Suppose that there are integers $\lambda$ and $k$ satisfying $a_{2^k-1} \ge \lambda+\omega(A)$ such that 

\begin{itemize}
\item{$a_{2^k + i} = a_{2^k} + a_i$ for all $0\le i < 2^k$, and}
\item{$a_{2^k} = 2a_{2^k - 1} - \lambda + 1 $.}
\end{itemize}

Then, $S(A)$ is independent with $\kappa(A)\le k$ and $\lambda(A)=\lambda$.
\end{lem}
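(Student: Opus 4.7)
The plan is to prove by induction on $j \ge k$ that the two independence conditions hold at level $j$: $a_{2^j + i} = a_{2^j} + a_i$ for $0 \le i < 2^j$, and $a_{2^j} = 2 a_{2^j-1} - \lambda + 1$. The base case $j = k$ is given by hypothesis. Because $\{a_n\}$ is increasing, $a_{2^j-1} \ge \lambda + \omega(A)$ propagates to all $j \ge k$, yielding the useful inequalities $a_{2^j} - a_{2^j-1} = a_{2^j-1} - \lambda + 1 > \omega(A)$ and $a_{2^j} > \omega(A)$; in particular every integer at least $a_{2^j}$ not in $S(A)$ must be covered by $S(A)$.

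For the inductive step, assume the conditions hold at level $j$ and set $A_j = \{a_0, \ldots, a_{2^j-1}\}$, so the first $2^{j+1}$ terms of $S(A)$ form the disjoint union $A_j \sqcup (a_{2^j} + A_j)$ with maximum $a_{2^{j+1}-1} = a_{2^j} + a_{2^j-1}$. Using the identity $a_{2^j} = 2 a_{2^j-1} - \lambda + 1$ one computes $2 a_{2^{j+1}-1} - \lambda + 1 = 3 a_{2^j}$, so the crucial step is to show that the greedy next term $a_{2^{j+1}}$ equals $3 a_{2^j}$; the relation $a_{2^{j+1}+i} = a_{2^{j+1}} + a_i$ for $1 \le i < 2^{j+1}$ then follows by a parallel secondary induction on $i$ using the same covering arguments. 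Establishing $a_{2^{j+1}} = 3 a_{2^j}$ amounts to (a) showing $3 a_{2^j}$ is not covered by $A_j \cup (a_{2^j} + A_j)$, and (b) showing every integer $x \in (a_{2^{j+1}-1}, 3 a_{2^j})$ is so covered. Part (a) is a direct three-case analysis on the halves of $A_j \sqcup (a_{2^j} + A_j)$ containing the two smaller AP terms: each case produces either a numerical impossibility or a 3-term AP inside $S(A)$, contradicting 3-freeness.

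For part (b), write $x = a_{2^j} + x'$ with $x' \in (a_{2^j-1}, 2 a_{2^j})$. In the subcase $x' < a_{2^j}$, the greedy definition at level $j$ forces $x'$ to be covered by an AP within $A_j$; translating that AP up by $a_{2^j}$ produces a covering AP for $x$ inside $a_{2^j} + A_j$. In the subcase $x' \ge a_{2^j}$, write $x' = a_{2^j} + x''$ with $x'' \in [0, a_{2^j})$. When $x'' \in A_j$ or $x''$ is covered by $A_j$, one assembles an explicit covering AP for $x$ from one element of $A_j$ and one of $a_{2^j} + A_j$. The delicate remaining subcase is $x'' \in O(A)$, where $x''$ is neither in nor covered by $A_j$; this is where the main obstacle lies.

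The key observation handling $x'' \in O(A)$, and the point at which the hypothesis $a_{2^k-1} \ge \lambda + \omega(A)$ is fully used, is that $a_{2^j} + x''$ must itself be covered by $S(A)$ (since $a_{2^j} + x'' > \omega(A)$); since $x'' \le \omega(A) < a_{2^j-1}$ we further have $a_{2^j} + x'' < a_{2^{j+1}-1}$, so the covering pair lies in $A_j \cup (a_{2^j} + A_j)$. Among the three possibilities for this pair, the ``both in $a_{2^j} + A_j$'' configuration forces $x''$ to be covered by $A_j$ (contradicting $x'' \in O(A)$), and the ``mixed'' configuration requires $x'' \ge a_{2^j} - a_{2^j-1} > \omega(A)$ (again contradicting $x'' \in O(A)$). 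Hence both members of the covering pair of $a_{2^j} + x''$ must lie in $A_j$, and shifting them up by $a_{2^j}$ produces a covering AP for $x = 2 a_{2^j} + x''$. Together with part (a) this gives $a_{2^{j+1}} = 3 a_{2^j}$, completing the primary induction.
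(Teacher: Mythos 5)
The paper does not prove Lemma~\ref{lem:kappa}; it cites it from Rolnick~\cite{rolnick} without argument, so there is no in-paper proof to compare against. Judged on its own, your strategy is right: induct on $j\ge k$, note $a_{2^j-1}\ge\lambda+\omega(A)$ propagates upward, and reduce the inductive step to identifying the greedy term at index $2^{j+1}$. Your case analysis for $a_{2^{j+1}}=3a_{2^j}$ is sound, and the delicate subcase $x''\in O(A)$ is handled correctly — the eliminations of the ``both in $a_{2^j}+A_j$'' and ``mixed'' configurations are exactly where the hypothesis earns its keep.

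The gap is the sentence asserting that $a_{2^{j+1}+i}=a_{2^{j+1}}+a_i$ for $1\le i<2^{j+1}$ ``follows by a parallel secondary induction on $i$ using the same covering arguments.'' The coverage half of that induction \emph{is} parallel: any $u$ with $a_{i-1}<u<a_i$ is covered within $\{a_0,\dots,a_{i-1}\}$, and translating by $3a_{2^j}$ covers $3a_{2^j}+u$. The non-coverage half is not. Write $c=a_{2^j}$ and $A_{j+1}=A_j\cup(c+A_j)$. To get $a_{2^{j+1}+i}=3c+a_i$ you must rule out a covering AP $y<z<3c+a_i$ with $y,z\in A_{j+1}\cup(3c+\{a_0,\dots,a_{i-1}\})$. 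Once $i\ge 2^j$, a configuration appears that has no analogue at $i=0$: $y\in A_{j+1}$ and $z\in 3c+\{a_0,\dots,a_{i-1}\}$. Writing $z=3c+z'$, the AP condition forces $y=3c+2z'-a_i$, and bounding places $z'\in A_j$, $a_i=c+a_{i'}$, and $y=c+y'$ with $y',a_{i'}\in A_j$; one then checks $a_{i'},\,c+z',\,y$ is a 3-term AP inside $A_{j+1}$, contradicting 3-freeness. That is a genuinely new manipulation, not a shift of anything in your part~(a), which only treats $y,z\in A_{j+1}$. The cleanest way to close the gap is to prove outright that $A_{j+1}\cup(3c+A_{j+1})$ is 3-free, disposing of both mixed configurations once, and deduce all $i$ simultaneously together with the (truly parallel) coverage of the intervals $(3c+a_{i-1},\,3c+a_i)$.

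A minor unstated point: you use $\omega(A)<a_{2^j-1}$, which does hold, but needs the observations that $\lambda\ge 0$ (since $2a_{2^k-1}+1$ is never covered by $A_k$) and that $\omega(A)\notin S(A)$ while $a_{2^j-1}\in S(A)$; it is worth a sentence.
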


If $x$ is an integer and $S$ is a set, we will use the notation $S+x$ to denote the set $\{y+x\mid y\in S\}$. The next lemma is based on methods used by Rolnick \cite{rolnick}.

\begin{lem}
\label{lem:cover}
Let $S(A)$ be an independent Stanley sequence. For some $k\ge \kappa$, set $c=a_{2^k}$ and let $A_k=\{a_0,a_1,\ldots,a_{2^k-1}\}$.  Suppose that $a_{2^k-1} \ge \lambda(A)+\omega(A)$. Then, the following statements hold for all integers $x,y$ such that $x<y$.

\begin{enumerate}
\renewcommand{\theenumi}{\alph{enumi}}
\item The set $A_k+x$ covers $$[x,c+x)\bs ((A_k\cup O(A))+x)\cup (O(A)+c+x).$$

\item The sets $A_k+x$ and $A_k+y$ jointly cover the set $$[2y-x,c+2y-x)\bs (O(A)+2y-x)\cup (O(A)+c+2y-x).$$

\item The set $(A_k+x)\cup (A_k+c+x)$ covers $$[x,3c+x)\bs ((A_k\cup (A_k+c)\cup O(A))+x)\cup (O(A)+3c+x).$$

\item The sets $(A_k+x)\cup (A_k+c+x)$ and $(A_k+y)\cup (A_k+c+y)$ jointly cover the set $$[2y-x,3c+2y-x)\bs (O(A)+2y-x)\cup (O(A)+3c+2y-x).$$

\item The set $(A_k+x)\cup (A_k+c+x)\cup (A_k+3c+x)\cup (A_k+4c+x)$ covers the set $$[x,9c+x)\bs ((A_k\cup (A_k+c)\cup(A_k+3c)\cup(A_k+4)\cup O(A))+x)\cup (O(A)+9c+x).$$
\end{enumerate}
\end{lem}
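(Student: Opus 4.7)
The lemma makes five covering assertions, but most reduce to the first. Parts (c) and (e) are part (a) applied at levels $k+1$ and $k+2$ respectively: since $A_{k+1}=A_k\cup(A_k+c)$ and $A_{k+2}=A_k\cup(A_k+c)\cup(A_k+3c)\cup(A_k+4c)$ with $a_{2^{k+1}}=3c$ and $a_{2^{k+2}}=9c$, and since the hypothesis $a_{2^k-1}\ge\lambda(A)+\omega(A)$ only strengthens as $k$ grows, the statements transfer verbatim. Similarly, part (d) is part (b) at level $k+1$. So it suffices to prove (a) and (b).

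For part (a), by translation I assume $x=0$. The set to be covered is $([0,c)\setminus(A_k\cup O(A)))\cup(O(A)+c)$. The first piece is immediate: any $z$ in it lies outside $S(A)\cap[0,c)=A_k$ and outside $O(A)$, so by the definition of $O(A)$ it is covered by some $3$-AP in $S(A)$; both smaller terms of this AP lie below $z<c$, hence in $A_k$. For the second piece, fix $o\in O(A)$. Since $o\notin A_k$ we have $o+c\notin A_k+c$, and $S(A)\cap[0,2c)=A_k\cup(A_k+c)$ by independence, so $o+c\notin S(A)$. Because $o+c>a_t$, the greedy step rejects $o+c$, exhibiting a covering AP in $S(A)$ whose two earlier terms lie in $A_k\cup(A_k+c)$. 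Three cases arise: both in $A_k$ (done); both in $A_k+c$, say $c+a_p<c+a_q$, in which case $(a_p,a_q,o)$ is a 3-AP in $A_k$ covering $o$ and contradicting $o\in O(A)$; or the mixed case $(a_p,c+a_q,o+c)$ with $a_p,a_q\in A_k$ and $o=c+2a_q-a_p$.

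The mixed case is the main obstacle. My plan is to extend this AP one step backwards to $c+3a_q-2o$. This integer is nonnegative because $c>2\omega(A)\ge 2o$, which follows from the identity $c=2a_{2^k-1}-\lambda(A)+1$ together with the hypothesis $a_{2^k-1}\ge\lambda(A)+\omega(A)$. Moreover it is not in $S(A)$, since otherwise $(c+3a_q-2o,\,a_p,\,c+a_q)$ would be three terms of an AP inside the 3-free set $S(A)$. Hence $c+3a_q-2o$ lies in $O(A)$ or is covered by $A_k$. In either sub-case the aim is to manufacture the desired $A_k$-covering of $o+c$, either by chaining with the covering AP of $c+3a_q-2o$, or by ``reflecting'' across the midpoint $c+a_q$ to produce a different AP terminating at $o+c$ whose two smaller terms both lie in $A_k$.

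Finally, part (b) reduces cleanly to part (a). With $x<y$, a joint cover of $z\in[2y-x,c+2y-x)$ by $A_k+x$ and $A_k+y$ is equivalent to finding $a_i,a_j\in A_k$ with $2a_j-a_i=z-(2y-x)$, producing the AP $(a_i+x,a_j+y,z)$; such $a_i,a_j$ exist precisely when $z-(2y-x)\in A_k$ (take $a_i=a_j$) or is covered by $A_k$. The first clause of (b) then follows from the first clause of (a), and the analogous reduction for $O(A)+c+2y-x$ yields the second clause of (b) from the second clause of (a).
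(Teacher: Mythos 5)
Your reduction of parts (c), (d), (e) to parts (a), (b) at higher $k$ matches the paper, and your reduction of part (b) to part (a) (shifting a covering AP of $A_k$ to a joint cover by $A_k+x$ and $A_k+y$, plus the diagonal case $a_i=a_j$ to handle $A_k+2y-x$) is also essentially the paper's argument. The first clause of part (a) is likewise fine.

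However, your handling of the mixed case in part (a) is a genuine gap. You correctly identify the three possibilities for a covering AP of $o+c$ inside $A_k\cup(A_k+c)$, but in the mixed case you only sketch a ``plan'' (``extend this AP one step backwards,'' ``the aim is to manufacture the desired $A_k$-covering,'' ``chaining'' or ``reflecting''); this is not an argument, and it is headed in the wrong direction. The point of the hypothesis $a_{2^k-1}\ge\lambda(A)+\omega(A)$ is precisely that the mixed case \emph{cannot occur}: the smallest integer jointly covered by $A_k$ and $A_k+c$ is $2c-a_{2^k-1}$ (taking the largest element of $A_k$ and the smallest element of $A_k+c$), and
\[
2c - a_{2^k-1} \;=\; c + (a_{2^k}-a_{2^k-1}) \;=\; c + a_{2^k-1}-\lambda(A)+1 \;\ge\; c+\omega(A)+1 \;>\; o+c.
\]
So any $o+c$ with $o\in O(A)$ lies strictly below the smallest jointly covered integer, ruling out the mixed case outright and leaving only the case where both earlier terms lie in $A_k$, which is exactly what is needed. (Your bound ``$c>2\omega(A)$'' also tacitly assumes $\lambda(A)\ge 0$, which is neither proved nor needed.) Replacing your speculative mixed-case sketch with this impossibility argument closes the gap and recovers the paper's proof.
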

\begin{proof}
We first prove part (a). Observe that the set $A_k$ must cover every integer in $[0,c)\bs (A_k\cup O(A))$ because these integers are assumed to be covered by $S(A)$.  Hence, $A_k+x$ covers $[x,c+x)\bs ((A_k\cup O(A))+x)$.  Now consider an integer $z$ in $O(A)+c$. Since $z\not\in S(A)\cup O(A)$, we know that $S(A)$ covers $z$, hence one of the following must be true: (i) $z$ is covered by $A_k$, (ii) $z$ is jointly covered by $A_k$ and $A_k+c$, or (iii) $z$ is covered by $A_k+c$. Case (iii) is impossible because $z\in O(A)+c$ and by definition no element of $O(A)$ is covered by $A_k$. Case (ii) is almost impossible, since the smallest integer jointly covered by $A_k$ and $A_k+s$ is \begin{align*}2a_{2^k}-a_{2^k-1}&=c+(a_{2^k}-a_{2^k-1})\\ &=c+a_{2^k-1}-\lb(A)+1\\ &\ge c+(\lb(A)+\om(A))-\lb(A)+1\\ &>c+\om(A).\end{align*}  We conclude that $z$ must be covered by $A_k$, so $O(A)+c$ is covered by $A_k$ and hence $O(A)+c+x$ is covered by $A_k+x$.

We now prove part (b). Note that if $z$ is covered by $A_k$, then $z+2y-x$ is jointly covered by $A_k+x$ and $A_k+y$. Applying part (a), then, we conclude that $A_k+x$ and $A_k+y$ jointly cover the set $[2y-x,c+2y-x)\backslash ((A_k\cup O(A))+2y-x)\cup (O(A)+c+2y-x)$. Furthermore, $A_k+x$ and $A_k+y$ jointly cover $A_k+2y-x$ because, for each $a\in A_k$, the integers $a+x,a+y,a+2y-x$ form an AP.

Parts (c) and (d) follow from parts (a) and (b), respectively, by setting $k\leftarrow k+1$.  Part (e) follows from part (a) by setting $k\leftarrow k+2$.
\end{proof}

The following proposition is the driving force behind the proof of Theorem \ref{thm:main}.

\begin{prop}
\label{prop:main}
Let $S(A) = \{a_n\}$ be an \iss, with $k>\kappa(A)$. Suppose that $a_{2^k-1} \ge \lambda(A)+\omega(A)$.  Pick $d$ any integer such that $\omega(A)<d\le a_{2^k}-\lb(A)$ and set $$A^d_k=A\cup (a_{2^k}+ A)\cup (7a_{2^k} - d + A)\cup (8a_{2^k} - d + A).$$ Then, $S(A^d_k)=\{a'_n\}$ is independent, with \begin{align*}\rho(A^d_k)&=10a_{2^k}-d\\ \alpha(A^d_k)&=\frac{10\alpha(A)}{9}-\frac{d}{3^{k+2}}.\end{align*}
\end{prop}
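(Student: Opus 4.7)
I plan to prove the proposition in three stages: verify that $A^d_k$ (read as the natural $2^{k+2}$-element set $\bigcup_{j=1}^{4}(A_k+p_j)$ with block positions $p_1=0$, $p_2=c$, $p_3=7c-d$, $p_4=8c-d$, where $c:=a_{2^k}$) is 3-free; identify $10c-d$ as the first greedy term past $A^d_k$; and conclude independence via Lemma \ref{lem:kappa} combined with Proposition \ref{prop:repeat}. The 3-freeness of $A^d_k$ is established by casework on the block indices $(j_1,j_2,j_3)$ of any candidate AP $x_1<x_2<x_3$: the relation $2p_{j_2}-p_{j_1}-p_{j_3}=\alpha_1+\alpha_3-2\alpha_2$ has right-hand side lying in $(-2c,2c)$, so the hypothesis $d\le c-\lambda(A)<c$ rules out most cross-block configurations by magnitude, while the surviving ones (such as $(1,1,2)$ or $(1,2,2)$) collapse to a 3-term AP inside $S(A)$ (for example $\alpha,\beta,c+\gamma$ via the independence of $S(A)$), contradicting the 3-freeness of $S(A)$.

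For stage two, applying Lemma \ref{lem:cover}(c) at $x=7c-d$ shows that $(A_k+7c-d)\cup(A_k+8c-d)$ covers $[7c-d,10c-d)$ with exception set sitting inside $[7c-d,8c-d)$. Hence every integer strictly between $a'_{2^{k+2}-1}=8c-d+a_{2^k-1}$ and $10c-d$ is covered. To show $10c-d$ is itself not covered, observe that any pair $y<z$ in $A^d_k$ with $2z-y=10c-d$ forces $z>(10c-d)/2>4c$, so $z$ lies in the top block $A_k+8c-d$; a short calculation then places $y$ in $A_k+7c-d$ and yields the relation $2\beta-\alpha=c$ for some $\alpha,\beta\in A_k$. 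But this would make $\alpha,\beta,c=a_{2^k}$ a 3-term AP in $S(A)$, again contradicting 3-freeness. So greedy picks $a'_{2^{k+2}}=10c-d$.

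For stage three, $\lambda(A^d_k)=7c-d+\lambda(A)$ is read off from $a'_{2^{k+2}}=2a'_{2^{k+2}-1}+1-\lambda(A^d_k)$ using the identity $2a_{2^k-1}+1=c+\lambda(A)$. The shift condition $a'_{2^{k+2}+i}=(10c-d)+a'_i$ for $0\le i<2^{k+2}$ I would verify inductively on $i$, each step rerunning a coverage analysis like stage two with the original $A^d_k$ together with its $(10c-d)$-translate now available, and each non-coverage step again reducing to 3-freeness of $S(A)$. Lemma \ref{lem:kappa}, applied at any $k'\ge k+2$ where $a'_{2^{k'}-1}\ge\lambda(A^d_k)+\omega(A^d_k)$ --- a bound that eventually holds since $a'_{2^{k'}-1}$ grows exponentially --- then gives that $S(A^d_k)$ is independent. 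Combining this with the verified structure at $k'=k+2$, Proposition \ref{prop:repeat} forces $\kappa(A^d_k)=k+2$ and $\rho(A^d_k)=10c-d$, whence $\alpha(A^d_k)=(10c-d)/3^{k+2}=10\alpha(A)/9-d/3^{k+2}$, as claimed.

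The main obstacle is the inductive verification of the shift condition in stage three: at each of the $2^{k+2}-1$ subsequent greedy steps, coverage bookkeeping must be redone across the eight blocks now in play (the original four plus their $(10c-d)$-translates), and the non-coverage arguments must exclude a growing array of cross-block AP configurations. The saving grace is that the $(10c-d)$-translation preserves the algebraic shape of the 3-freeness obstructions, so the case analysis runs parallel to stage one, and the structural self-similarity of the predicted sequence $\{(10c-d)x+y:x\in S(0),\,y\in A^d_k\}$ makes the induction manageable.
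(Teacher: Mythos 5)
Your stages 1 and 2 are sound and parallel to the paper's setup, but stage 3 has a genuine gap that turns out to be the heart of the matter. You plan to verify the shift condition $a'_{2^{k+2}+i}=(10c-d)+a'_i$ for $0\le i<2^{k+2}$ and then invoke Lemma \ref{lem:kappa} ``at any $k'\ge k+2$ where $a'_{2^{k'}-1}\ge\lambda(A^d_k)+\omega(A^d_k)$.'' The problem is that this inequality \emph{fails} at $k'=k+2$: with $b=a_{2^k-1}$ and $c=a_{2^k}$ one has $a'_{2^{k+2}-1}=b+8c-d$, whereas $\lambda(A^d_k)+\omega(A^d_k)=(2b+6c-d+1)+(\omega(A)+8c-d)$, which is larger since $b<c$ and $d<c$. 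So one is forced to $k'=k+3$. But applying Lemma \ref{lem:kappa} at $k+3$ requires the shift condition $a'_{2^{k+3}+i}=a'_{2^{k+3}}+a'_i$ for all $0\le i<2^{k+3}$, i.e.\ you must control $a'_n$ up to index $2^{k+4}-1$. The shift at $k+2$ alone, even combined with knowledge of the initial $2^{k+2}$ terms, only determines the sequence up to index $2^{k+3}-1$; the remaining $2^{k+3}$ terms require an entirely new round of 3-freeness and coverage verification, and you would have to discover that the block positions follow the pattern $\{0,\rho',3\rho',4\rho'\}$ (binary written in ternary), not simply ``$A^d_k$ together with its $(10c-d)$-translate.'' This is precisely why the paper's proof constructs the explicit $2^{k+4}$-element set $J=B\cup C\cup\cdots\cup I$ up front, proves 3-freeness of $J$ via the elegant reduction modulo $10c-d$, and establishes coverage over the whole interval $[b+8c-d,\,b+48c-5d]$ using some dozen applications of Lemma \ref{lem:cover}. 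Your ``main obstacle'' paragraph acknowledges difficulty but underestimates its scope by roughly a factor of two and mislocates where the heavy lifting happens; the inductive coverage bookkeeping you defer is not a matter of cleanup but is the proof itself, and you do not supply the crucial observation (failure of the inequality at $k+2$) that dictates how far out the prefix analysis must be carried.
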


Before proving the proposition, we provide a motivating result from \cite{rolnick}.

\begin{prop}[Rolnick \cite{rolnick}]
\label{prop:product}
Suppose that $S(A)=\{a_n\}$ and $S(B)=\{b_n\}$ are Stanley sequences, and that $k\ge\kappa(A)$. Let $A^{\ast}=\{a_0,a_1,\ldots,a_{2^k-1}\}$ and define $$A\otimes_k B=\{a_{2^k}b+a\mid  a\in A^{\ast},b\in B\}.$$Then, if $S(A)$ and $S(B)$ are independent, $S(A\otimes_k B)$ is an independent sequence having description $$S(A\otimes_k B)=\{a_{2^k}b+a\mid  a\in A^{\ast},b\in S(B)\}.$$
\end{prop}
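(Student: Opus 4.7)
Let $c := a_{2^k}$ and set
\[
T := \{cb + a : b \in S(B),\, a \in A^*\}.
\]
The plan is to prove $S(A \otimes_k B) = T$ by a greedy induction, and then read off independence from Proposition~\ref{prop:repeat}. Since $T \cap [0, \max(A \otimes_k B)] = A \otimes_k B$, the induction rests on two claims: (i) $T$ is 3-free; and (ii) every integer $n > \max(A \otimes_k B)$ not in $T$ is covered by a 3-AP with both earlier terms inside $T$.

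For (i), take a putative 3-AP $u_1 < u_2 < u_3$ in $T$, write $u_i = cb_i + a_i'$ with $b_i \in S(B)$ and $a_i' \in A^*$, and rearrange the AP condition to $c(b_1 + b_3 - 2b_2) = 2a_2' - a_1' - a_3'$. Since each $a_i' \in [0, c)$, the right side has absolute value less than $2c$, forcing $b_1 + b_3 - 2b_2 \in \{-1, 0, 1\}$. The value $0$ produces either the degenerate $b_1 = b_2 = b_3$ (and then a 3-AP inside $A^* \subseteq S(A)$) or a nontrivial 3-AP inside $S(B)$, both impossible. The values $\pm 1$ give $a_1' + a_3' = 2a_2' \mp c$, which upgrades via $c + A^* \subseteq S(A)$ (a consequence of $k \ge \kappa(A)$) to a 3-AP $\{a_1', a_2', c + a_3'\}$ or $\{c + a_1', a_2', a_3'\}$ inside $S(A)$, again contradicting 3-freeness.

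For (ii), write $n = cb + r$ uniquely with $0 \le r < c$, so that $n \notin T$ means $r \notin A^*$ or $b \notin S(B)$. The easy subcases are: if $r \in A^*$ but $b \notin S(B)$, pick $b', b'' \in S(B)$ with $b' + b = 2b''$ and use the AP $(cb' + r,\, cb'' + r,\, n)$; if $r$ is covered by $A^*$ via $a' + r = 2a''$ and $b \in S(B)$, use the AP $(cb + a',\, cb + a'',\, n)$ inside a single $b$-slab; the mixed case combines both. The delicate case is $r \in O(A)$. Since $r \in [0, c) \setminus A^*$, the integer $c + r$ cannot lie in $S(A) \cap [c, 2c) = c + A^*$, so $c + r$ must be covered by $S(A)$ (recalling $c + r > \omega(A)$). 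Decomposing a covering 3-AP $(y, z, c + r)$ as $y = cx_y + \alpha_y$, $z = cx_z + \alpha_z$ with $x_y, x_z \in S(0)$ and $\alpha_y, \alpha_z \in A^*$, the constraint $x_y + 1 - 2x_z \in \{-1, 0, 1\}$ together with the rigidity $S(0) \cap 2S(0) = \{0\}$ of the ternary-digit pattern and the hypothesis $r \in O(A)$ pin $(x_y, x_z)$ to either $(0, 0)$ or $(0, 1)$. Each option yields an affine relation $\alpha_y + r = 2\alpha_z \mp c$, which lifts to a covering 3-AP $(cb' + \alpha_y,\, cb'' + \alpha_z,\, n)$ of $n$ in $T$ once $b', b'' \in S(B)$ are chosen to satisfy the analogous affine equation with $b$.

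The principal obstacle is this last existence step: showing that $b', b'' \in S(B)$ with the required affine relation to $b$ can always be found. It is handled by invoking the structural form $S(B) = \{\rho(B)\,x + \beta : x \in S(0),\, \beta \in \{b_0, \ldots, b_{2^{\kappa(B)}-1}\}\}$ guaranteed by Proposition~\ref{prop:repeat}, and arguing on the ternary digits of $b$ via the independence identities for $S(B)$. Once $S(A \otimes_k B) = T$ is established, independence is immediate: substituting the Proposition~\ref{prop:repeat} representation of $S(B)$ into the definition of $T$ gives
\[
T = \{(c\rho(B))\,x + (c\beta + a) : x \in S(0),\, \beta \in \{b_0, \ldots, b_{2^{\kappa(B)}-1}\},\, a \in A^*\},
\]
which is the Proposition~\ref{prop:repeat} form for an independent Stanley sequence with repeat factor $c\rho(B) = a_{2^k}\rho(B)$ and index $\kappa = k + \kappa(B)$.
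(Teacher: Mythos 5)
First, note that the paper does not prove this proposition at all: it is quoted from Rolnick's earlier paper, so there is no in-paper argument to compare against; your proposal has to stand on its own. Its architecture is the natural one (show $T=\{cb+a\}$ is 3-free and covers every larger integer outside itself, so that the greedy construction reproduces $T$, then read independence off Proposition~\ref{prop:repeat}), the 3-freeness argument is essentially correct (modulo a slip: in the $-1$ case the AP in $S(A)$ is $\{a_3',\,c+a_2',\,c+a_1'\}$, not $\{c+a_1',a_2',a_3'\}$), and the endgame via Proposition~\ref{prop:repeat} is fine. But the step you yourself flag as ``the principal obstacle'' is a genuine gap, and the fix you gesture at is not the right one. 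After pinning the covering AP of $c+r$ (for $r\in O(A)$) to the configurations $(x_y,x_z)=(0,0)$ or $(0,1)$, your lift requires $b',b''\in S(B)$ with $b'+b=2b''+1$ in the first configuration and $b'+b=2b''-1$ in the second, together with $b''\le b$. The first relation is always solvable, but not by ``ternary digits of $b$'': it says exactly that $b-1$ is either in $S(B)$ or covered by $S(B)$, which is automatic because $b-1\ge\max B$. The second relation, by contrast, is genuinely unsolvable in general: with $S(B)=S(0)$ and $b=2$ you would need $b''\in S(0)$, $b''\le 2$, with $2b''-3\in S(0)$ and nonnegative, and no such pair exists. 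So no amount of digit analysis of $b$ rescues the $(0,1)$ configuration.

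What is actually needed, and what your sketch neither states nor proves, is the structural fact that for $r\in O(A)$ the integer $c+r$ is covered by $A^*$ \emph{alone}, i.e.\ the $(0,0)$ configuration is always available; then only the $+1$ relation is ever needed and the lift goes through as above. This is precisely the content of Lemma~\ref{lem:cover}(a) of the present paper ($O(A)+c$ is covered by $A_k$), whose proof is not automatic: one must rule out the joint covering by $A^*$ and $c+A^*$ by showing that the smallest jointly covered integer, $2a_{2^k}-a_{2^k-1}=c+a_{2^k-1}-\lambda(A)+1$, exceeds $c+\omega(A)$, which uses the gap condition $a_{2^k-1}\ge\lambda(A)+\omega(A)$ (or an argument that one may reduce to $k$ for which it holds, since the statement allows any $k\ge\kappa(A)$). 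Until you supply this step, the case $r\in O(A)$ of your covering claim (ii) — and hence the identification $S(A\otimes_k B)=T$ — is not established. Two smaller points to tidy up in the same case analysis: when $b\notin S(B)$ you invoke a covering of $b$ by $S(B)$, which requires observing that $n>\max(A\otimes_k B)$ forces $b>\max B$ so that $b$ cannot lie in $O(B)$; and the exclusion of the $(1,1)$ configuration should cite the ordering $y<z<c+r<2c$ together with $r\in O(A)$, rather than the rigidity $S(0)\cap 2S(0)=\{0\}$, which is not what is doing the work there.
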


\begin{rem}
Proposition \ref{prop:repeat} implies that a Stanley sequence $S(A)$ satisfies $S(A\otimes_\kappa \{0\})=S(A)$ (for some $\kappa$) if and only if $S(A)$ is independent.
\end{rem}

\begin{rem}
It is readily verified that the scaling factor of $S(A\otimes_k B)$ is simply the product of the scaling factors of $S(A)$ and $S(B)$. Taking $A=\{0,1,7\}$ and $B_0=\{0,1,7\}$, so that $\alpha(A)=10/9$, it follows that iterated products $S(B_n)=S(A\otimes_k B_{n-1})$ satisfy $\alpha(B_n)=(10/9)^n$. Hence, the scaling factor can be made arbitrarily large. Theorem \ref{thm:main} clearly proves a much stronger result.
\end{rem}

In the light of Proposition \ref{prop:product}, Proposition \ref{prop:main} may be seen as defining Stanley sequences that are in some sense ``intermediate'' between \begin{align*}S(A\otimes_k \{0,1,6,7\})&=S\left(A\cup (A+a_{2^k})\cup (A+6a_{2^k})\cup (A+7a_{2^k})\right)\\ \text{and}\hskip .1 in S(A\otimes_k \{0,1,7,8\})&=S\left(A\cup (A+a_{2^k})\cup (A+7a_{2^k})\cup (A+8a_{2^k})\right).\end{align*}

\begin{proof}[Proof of Proposition \ref{prop:main}]

Set $\lambda:=\lambda(A)$, $\omega:=\omega(A)$, $b:=a_{2^k-1}$, and $c:=a_{2^k}$, and $A^\ast=\{a_0,a_1,\ldots,a_{2^k-1}\}$. We define the following sets.

\begin{align*}B&:= A^\ast\cup (A^\ast+c)\\
C&:=(A^\ast+7c-d)\cup (A^\ast+8c-d)\\
D&:=(A^\ast+10c-d)\cup(A^\ast+11c-d)\\
E&:=(A^\ast+17c-2d)\cup(A^\ast+18c-2d)\\
F&:=(A^\ast+30c-3d)\cup(A^\ast+31c-3d)\\
G&:=(A^\ast+37c-4d)\cup(A^\ast+38c-4d)\\
H&:=(A^\ast+40c-4d)\cup(A^\ast+41c-4d)\\
I&:=(A^\ast+47c-5d)\cup(A^\ast+48c-5d)\\
J&:=B\cup C\cup D\cup E\cup F\cup G\cup H\cup I
\end{align*}

Our approach is as follows. We prove that (i)  $J$ is 3-free and (ii) the set $J$ covers all integers between $\max(C)=b+8c-d$ and $\max(I)=b+48c-5d$, with the exception of $J$ itself. This implies that $J$ is a prefix of $S(A^d_k)$. We now may apply Lemma \ref{lem:kappa} to prove that $S(A^d_k)=\{a'_n\}$ is independent. In order to apply this lemma, however, we require the condition $a'_{2^{k+3}-1}\ge \lb(A^d_k)+\om(A^d_k)$. (This is the reason we must consider such a large prefix subsequence of $S(A^d_k)$.) We may verify this condition as follows:

\begin{align*}\lb(A^d_k)&= 2(b+8c-d)-(10c-d)+1\\ &=2b+6c-d+1\\ &<8c-d\\ \om(A^d_k)&=\om(A)+8c-d\\ &<b+8c-d.\end{align*}

Hence,
\begin{align*}a'_{2^{k+3}-1}&=b+18c+2d\\ &>(8c-d)+(b+8c-d)\\ &>\lb(A^d_k)+\om(A^d_k).\end{align*}

We now show that $J$ is $3$-free. Suppose towards contradiction that $x,y,z\in J$ form an AP with $x<y<z$. Observe that $J$ reduces modulo $10c-d$ to $B\cup C$: $$J=(B\cup C)\cup (B\cup C+(10c-d))\cup (B\cup C+3(10c-d))\cup (B\cup C+4(10c-d)).$$There is no 3-term AP in the set $\{w,w+(10c-d),w+3(10c-d),w+4(10c-d)\}$ for any value of $w$; hence, $x$ and $y$ must be distinct modulo $10c-d$.

Notice that $C\cup D\equiv B\cup C\pmod{10c-d}$. Let $x',y'\in C\cup D$ be congruent, respectively, to $x,y$ modulo $10c-d$, and let $z'=2y'-x'$ so that $x',y',z'$ form a 3-term AP (possibly decreasing). Because $2y'-x'\equiv z\pmod{10c-d}$, we know that $z'\in B\cup C\cup D\cup E$. Since $7c-d\le x',y'\le b+11c-d$, we see that $$-b+3c-d\le 2y'-x'\le 2b+15c-d.$$Because $d\le c-\lb$, we conclude that\begin{align*}z'&\ge -b+3c-d=b-\lb+1+2c-d\ge b+c+1,\\ \text{and}\hskip .25 in z'&\le 2b+15c-d=16c+\lb-1-d\le 17c-2d-1.\end{align*}Hence, $z'$ cannot be in $B$ or $E$, so $z'\in C\cup D$.  But $C\cup D$ is 3-free since $$C\cup D=\{a_n+7c-d\mid 0\le n<2^{k+2}\},$$and we know that $\{a_n\}$ is 3-free. Therefore, $x',y',z'$ cannot be an AP, a contradiction.  We conclude that $J$ is 3-free, as desired.

We now use repeated applications of Lemma \ref{lem:cover} to prove that the set $J$ covers all elements of $[b+8c-d,b+48c-5d]\bs J$.

By part (e) of Lemma \ref{lem:cover}, $C\cup D$ covers the set \begin{equation}[7c-d,16c-d)\bs (C\cup D\cup(O(A)+7c-d))\cup (O(A)+16c-d).\label{one}\end{equation}

By part (d), $B$ and $C$ jointly cover \begin{equation}[14c-2d,17c-2d)\bs (O(A)+14c-2d)\cup (O(A)+17c-2d).\label{two}\end{equation}

By part (c), $E$ covers \begin{equation}[17c-2d,20c-2d)\bs (E\cup(O(A)+17c-2d))\cup (O(A)+20c-2d).\label{three}\end{equation}

By part (d), $B$ and $D$ jointly cover \begin{equation}[20c-2d,23c-2d)\bs (O(A)+20c-2d)\cup (O(A)+23c-2d).\label{four}\end{equation}

By part (b), $(A^\ast+11c-d)$ and $(A^\ast+17c-d)$ jointly cover \begin{equation}[23c-3d,24c-3d)\bs (O(A)+23c-3d)\cup (O(A)+24c-3d).\label{five}\end{equation}

By part (d), $D$ and $E$ jointly cover \begin{equation}[24c-3d,27c-3d)\bs (O(A)+24c-3d)\cup (O(A)+27c-3d).\label{six}\end{equation}

By part (d), $C$ and $E$ jointly cover \begin{equation}[27c-3d,30c-3d)\bs (O(A)+27c-3d)\cup (O(A)+30c-3d).\label{seven}\end{equation}

By part (c), $F$ covers \begin{equation}[30c-3d,33c-3d)\bs (F\cup(O(A)+30c-3d))\cup (O(A)+33c-3d).\label{eight}\end{equation}

By part (d), $E$ and $F$ jointly cover \begin{equation}[33c-4d,36c-4d)\bs (O(A)+33c-4d)\cup (O(A)+36c-4d).\label{nine}\end{equation}

By part (d), $B$ and $E$ jointly cover \begin{equation}[34c-4d,37c-4d)\bs (O(A)+34c-4d)\cup (O(A)+37c-4d).\label{ten}\end{equation}

By part (e), $G\cup H$ covers \begin{equation}[37c-4d,46c-4d)\bs (G\cup H\cup(O(A)+37c-4d))\cup (O(A)+46c-4d).\label{eleven}\end{equation}

By part (d), $F$ and $G$ jointly cover \begin{equation}[44c-5d,47c-5d)\bs (O(A)+44c-5d)\cup (O(A)+47c-5d).\label{twelve}\end{equation}

By part (c), $I$ covers \begin{equation}[47c-5d,50c-5d)\bs (I\cup(O(A)+47c-5d))\cup (O(A)+50c-5d).\label{thirteen}\end{equation}

Combining the sets in (\ref{one}), (\ref{two}), (\ref{three}), (\ref{four}), we conclude that $J$ covers the set \begin{equation}[7c-d,23c-2d)\bs (C\cup D\cup E).\label{fourteen}\end{equation}

Combining the sets in (\ref{five}), (\ref{six}), (\ref{seven}), (\ref{eight}), we conclude that $J$ covers \begin{equation}[23c-3d,33c-3d)\bs (F\cup (O(A)+23c-3d)).\label{fifteen}\end{equation}

Combining the sets in (\ref{nine}), (\ref{ten}), (\ref{eleven}), (\ref{twelve}), (\ref{thirteen}), we conclude that $J$ covers \begin{equation}[33c-4d,48c-5d)\bs (G\cup H\cup I\cup (O(A)+33c-4d)).\label{sixteen}\end{equation}

The largest element of $O(A)+23c-3d$ is $\om+23c-3d<23c-2d$, where we used the assumption that $d>\om$. Likewise, the largest element of $O(A)+33c-4d$ is $\om+33c-4d<33c-3d$.  Hence, we can combine the sets in (\ref{fourteen}), (\ref{fifteen}), (\ref{sixteen}) into \begin{equation}[7c-d,50c-5d)\bs (C\cup D\cup E\cup F\cup G\cup H\cup I).\label{seventeen}\end{equation}In particular, $J$ covers the set $[b+8c-d,b+48c-5d]\bs J$, which is a subset of (\ref{seventeen}).

Since $J$ is 3-free and covers all elements of $[b+8c-d,b+48c-5d]\bs J$, we conclude that $J$ is a prefix of $S(A^d_k)$. Therefore, by Lemma \ref{lem:kappa}, the sequence $S(A^d_k)$ is independent.
\end{proof}

As a result of the construction given in Proposition \ref{prop:main}, we obtain the following result.

\begin{prop}
\label{prop:epsilon}
Let $S(A) =\{a_n\}$ be an \iss, with scaling factor $\al$ and repeat factor $\rho$. Then,

\begin{enumerate}
\renewcommand{\theenumi}{\alph{enumi}}
\item Suppose that $\al'$ is a triadic number with $\al\le \al'<10\al/9$. Then, $\al'$ is the scaling factor of some independent Stanley sequence.
\item Choose $\ep>0$. There exists an integer $N_\ep(A)$ such that for all $k\ge N_\ep(A)$, every integer in the interval $[3^k (1+\epsilon)\rho,3^k (10/9-\epsilon)\rho]$ is a repeat factor for some independent Stanley sequence.
\end{enumerate}
\end{prop}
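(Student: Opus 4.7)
The plan is to derive both parts from a single application of Proposition \ref{prop:main} to $S(A)$, solving for the free parameter $d$ in terms of the desired target. The only nontrivial work is verifying that this $d$ satisfies the admissibility condition $\omega(A) < d \le a_{2^k} - \lambda(A)$ and that the remaining hypotheses of Proposition \ref{prop:main} hold; both will follow routinely once $k$ is chosen sufficiently large.

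For part (a), the case $\alpha' = \alpha$ is witnessed by $S(A)$ itself, so I assume $\alpha < \alpha'$. I would set $d_k := 3^{k+2}(10\alpha/9 - \alpha')$, which is the unique value forcing the formula $\alpha(A_k^d) = 10\alpha/9 - d/3^{k+2}$ from Proposition \ref{prop:main} to equal $\alpha'$. Since $\alpha = \rho/3^{\kappa(A)}$, $\alpha'$, and $10/9$ are all triadic, $d_k$ is a positive integer once $k$ exceeds the (fixed) triadic denominator of $10\alpha/9 - \alpha'$. Positivity of this difference makes $d_k$ grow like $3^k$, eventually exceeding $\omega(A)$; and the strict inequality $\alpha < \alpha'$ makes $a_{2^k} - d_k = 9\cdot 3^k(\alpha' - \alpha)$ eventually exceed $\lambda(A)$. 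For $k$ large enough, Proposition \ref{prop:main} then yields an independent Stanley sequence of scaling factor $\alpha'$.

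For part (b), I would instead fix $k' := k + \kappa(A) - 2$, so that $a_{2^{k'}} = \rho \cdot 3^{k-2}$ and $10 a_{2^{k'}} = (10/9)\rho\cdot 3^k$ matches the upper endpoint of the target interval. Given any integer $N \in [(1+\epsilon)\rho\cdot 3^k,\, (10/9-\epsilon)\rho\cdot 3^k]$, I set $d := 10 a_{2^{k'}} - N$, which is automatically an integer and which, by Proposition \ref{prop:main}, produces $\rho(A_{k'}^d) = N$. The upper bound on $N$ gives $d \ge \epsilon\rho\cdot 3^k$, and the lower bound gives $d \le (1/9-\epsilon)\rho\cdot 3^k$. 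Both admissibility conditions $d > \omega(A)$ and $d \le a_{2^{k'}} - \lambda(A) = (1/9)\rho\cdot 3^k - \lambda(A)$ then hold once $k$ is large enough that $\epsilon\rho\cdot 3^k$ exceeds $\max(\omega(A), \lambda(A))$. Absorbing this, together with $k' > \kappa(A)$ and $a_{2^{k'}-1} \ge \lambda(A)+\omega(A)$, into a single threshold produces the desired $N_\epsilon(A)$.

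There is no substantive mathematical obstacle here; the proposition is essentially a corollary of Proposition \ref{prop:main} combined with elementary triadic arithmetic. The only point demanding care is ensuring in part (b) that a single threshold works uniformly over every integer $N$ in the target interval, but since the obstructing quantities $\omega(A)$, $\lambda(A)$, and the triadic denominator of $\alpha$ are all constants depending only on $A$, whereas $\epsilon\rho\cdot 3^k$ grows exponentially in $k$, the required uniformity is automatic.
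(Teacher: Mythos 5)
Your proposal is correct and matches the paper's proof in substance: both parts are obtained by applying Proposition \ref{prop:main} with $d$ chosen to hit the target scaling factor or repeat factor, and then verifying $\omega(A) < d \le a_{2^k} - \lambda(A)$ for $k$ large using that $d$ grows like $3^k$ while $\omega(A)$, $\lambda(A)$ are fixed. The only cosmetic difference is the index bookkeeping in part (b) — you work forward from $k' = k + \kappa(A) - 2$ while the paper works with $k$ and then sets $N_\epsilon(A) = k_\epsilon - \kappa + 2$ — but these are the same reparametrization.
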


\begin{proof} Set $\lb=\lb(A)$, $\om=\om(A)$, and $\kp=\kp(A)$.

(i) Clearly $\al$ itself is a scaling factor, so suppose we have $\al'>\al$. Proposition \ref{prop:main} implies that $$\al(A^d_k)=\frac{10\al(A)}{9}-\frac{d}{3^{k+2}}$$ for large enough $k$. Set $d=3^k(10\al-9\al')$, so that $$\al'=\frac{10\al}{9}-\frac{d}{3^{k+2}}.$$Note that $d$ is an integer for large $k$, because $\al'$ is a triadic number. Since $\al'<10\al/9$, the condition $d>\omega(A)$ is satisfied for $k$ large enough. Likewise, since $\al'>\al$, we have $d=t\alpha 3^k$ for some value $t<1$ independent of $k$. Hence, we can make $k$ large enough to satisfy the condition $d\le a_{2^k}-\lb=\alpha 3^k-\lb(A)$. We conclude that there exists an independent Stanley sequence for which $\al'$ is the scaling factor.

(ii) Proposition \ref{prop:main} implies that $$\rho(A^d_k)=10a_{2^k}-d,$$for large enough $k$. Thus, $\rho(A^d_k)$ may take on any integral value $\rho'$ such that $$9a_{2^k}+\lb=10a_{2^k}-(a_{2^k}-\lb)\le \rho'<10a_{2^k}-\om.$$Pick $k_\ep$ large enough so that for any $k\ge k_\ep$, \begin{align*}9a_{2^k}+\lb &=3^{k-\kp+2}\cdot \rho(A)+\lb<3^{k-\kp+2}(1+\ep)\rho\\ \text{and}\hskip .25 in 10a_{2^k}-\om &=3^{k-\kp+2}\cdot \frac{10}{9}\rho(A)-\om>3^{k-\kp+2}(1+\ep)\left(\frac{10}{9}-\ep\right)\rho,\end{align*}where we used the fact that $\rho(A)=a_{2^\kappa}=a_{2^k}/3^{k-\kappa}$. Let us take $N_\ep(A)=k_\ep-\kp+2$. Then, for each $k\ge N_\ep(A)$, every integer in the interval $[3^k (1+\epsilon)\rho,3^k (10/9-\epsilon)\rho]$ is a repeat factor for some independent Stanley sequence.
\end{proof}

We now are able to prove Theorem \ref{thm:main}.

\begin{proof}[Proof of Theorem]
(i) We apply Proposition \ref{prop:epsilon}(a) to the sequence $S(A)=S(0)$, for which $\al(A)=1$. Hence, every triadic number $\al'\in [1,10/9)$ is a valid scaling factor. Applying Proposition \ref{prop:epsilon} again, we see that every triadic number $\al''\in [1,100/81)$ is a scaling factor. Continuing in this way, we conclude that every triadic number in $[1,(10/9)^n)$ is a scaling factor, for any value of $n$. Since $(10/9)^n$ can be made arbitrarily large, we conclude that every triadic number $\al\ge 1$ is a valid scaling factor.

(ii) Pick some small $\ep>0$. We apply Proposition \ref{prop:epsilon}(b) to the sequence $S(A_1)=S(0)$, for which $\rho(A_1)=1$. For every $k\ge N_\ep(A_1)$, each integer $x$ in the interval $[3^k (1+\epsilon),3^k (10/9-\epsilon)]$ is the repeat factor of some independent sequence $S(A_x)$.

We next apply Proposition \ref{prop:epsilon}(b) to each sequence $S(A_x)$. For every $k_x\ge N_\ep(A_x)$, each integer in the interval $[3^k (1+\epsilon)x,3^k (10/9-\epsilon)x]$ is a repeat factor. These intervals overlap as $x$ varies over the integers in $[3^k (1+\epsilon),3^k (10/9-\epsilon)]$. Hence, for every $k\ge \max_x N_\ep(A_x)$, each integer $y$ in the interval $[3^k (1+\epsilon)^2,3^k (10/9-\epsilon)^2]$ is the repeat factor of some independent sequence $S(A_y)$.

We may now apply Proposition \ref{prop:epsilon}(b) to each sequence $S(A_y)$. Continuing in this manner, we conclude that, for each $n$, there exists $N_n$ such that the following property holds: Whenever $k\ge N_n$, every integer in the interval $[3^k (1+\epsilon)^n,3^k (10/9-\epsilon)^n]$ is the repeat factor of some independent sequence. For $\ep$ small, we can pick $n$ so that $(10/9-\epsilon)^n>3\cdot (1+\epsilon)^n$. Then, $3^k (10/9-\epsilon)^n>3^{k+1}(1+\epsilon)^n$, so every sufficiently large integer is contained in a set of the form $[3^k (1+\epsilon)^n,3^k (10/9-\epsilon)^n]$ for some $k\ge N_n$. Therefore, every sufficiently large integer is the repeat factor of an independent Stanley sequence.
\end{proof}

\section{Open problems}

There remain many unanswered questions related to the growth of Stanley sequences. Our proof leaves open the question of which integers are not the repeat factor of any independent sequence. Rolnick additionally posed the problem of identifying which values of $\lb(A)$ are attainable.

\begin{conj}[Rolnick \cite{rolnick}]
Let $\lb$ be any integer other than $1,3,5,9,11,15$. Then, there exists an independent Stanley sequence $S(A)$ such that $\lb(A)=\lb$.
\end{conj}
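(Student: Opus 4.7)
The plan is to split the conjecture into a constructive half (every $\lambda$ outside the exceptional set is attainable) and an obstructive half (none of the six exceptional values is attainable).

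For attainability, I would first extract explicit formulas for $\lambda$ from the two transformations already developed in this paper. The proof of Proposition~\ref{prop:main} identifies $\lambda(A^d_k) = 2b+6c-d+1$ with $b=a_{2^k-1}$ and $c=a_{2^k}$; substituting $\lambda(A) = 2b+1-c$ rewrites this as $\lambda(A^d_k) = 7c + \lambda(A) - d$. Since $d$ may be any integer in $(\omega(A),\, c-\lambda(A)]$, each pair $(A,k)$ realises every integer in the block $[6c + 2\lambda(A),\ 7c + \lambda(A) - \omega(A) - 1]$ as an attainable value. A short parallel calculation with Proposition~\ref{prop:product} yields $\lambda(A\otimes_k B) = \lambda(A) + a_{2^k}\,\lambda(B)$, using the observation that the two relevant consecutive terms in $S(A\otimes_k B)$ are $a_{2^k-1}+a_{2^k}b_{2^{\kappa(B)}-1}$ and $a_{2^k}b_{2^{\kappa(B)}}$. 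Starting from base sequences with directly computable $\lambda$ (for example $\lambda(S(0))=0$ and $\lambda(S(0,3))=6$) and iterating both transformations, one obtains a rapidly expanding family of attainable $\lambda$. The argument for attainability then runs by a density induction patterned on part~(ii) of the proof of Theorem~\ref{thm:main}: varying $k$ in $A^d_k$ produces overlapping blocks of attainable $\lambda$, while taking $\otimes_k$-products fills the residual gaps, so every sufficiently large $\lambda$ is realised. Small $\lambda$ outside the forbidden set are handled by a finite catalogue of explicit examples.

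For impossibility I would exploit the forced presence of $0\in A$ and, in essentially all cases, $1\in A$. The case $\lambda=1$ is immediate: $a_{2^k} = 2a_{2^k-1}$ would give a $3$-term AP $0, a_{2^k-1}, a_{2^k}$, violating $3$-freeness. For each remaining forbidden value $\lambda\in\{3,5,9,11,15\}$, I would analyse the short interval $(a_{2^k-1},\ 2a_{2^k-1}+1-\lambda)$: every integer in it must be covered by a $3$-term AP with two smaller members of $\{a_0,\ldots,a_{2^k-1}\}$, and the joint parity and divisibility constraints these covers impose should prove incompatible with the independence recursion. Combining these local constraints with the global identity
\[
\lambda(A) \;=\; 1 - \rho(A) + 2\sum_{j=0}^{\kappa(A)-1} a_{2^j},
\]
obtained by unfolding $a_{2^k-1} = a_{2^{k-1}} + a_{2^{k-1}-1}$, one should see that no initial segment compatible with independence can yield the target $\lambda$.

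The hard part will be the obstructive direction: each of the five remaining forbidden values demands a finite but delicate enumeration of the short initial segments $\{a_0,\ldots,a_{2^\kappa-1}\}$ that can conceivably start an independent Stanley sequence, and verifying that none of them yields that $\lambda$. A secondary difficulty on the constructive side is certifying rigorously that the blocks produced by iterating $A^d_k$ and $\otimes_k$ truly leave no gaps; by analogy with the proof of Theorem~\ref{thm:main}(i), this should follow from the fact that one can move from a scaling factor $\alpha$ to $10\alpha/9$ in arbitrarily dense triadic increments, so the companion $\lambda$-values inherit a similarly dense structure.
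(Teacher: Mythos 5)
This is a conjecture, not a theorem: the paper restates it in the ``Open problems'' section explicitly as an unresolved question posed by Rolnick, and offers no proof. There is therefore no argument of the authors' to compare yours against; any ``comparison'' can only be an assessment of whether your plan, on its own terms, constitutes a proof. It does not. You yourself flag the obstructive half (showing that $\lambda\in\{3,5,9,11,15\}$ is unattainable) as ``the hard part,'' and your description of it --- analyse the interval $(a_{2^\kappa-1},\,2a_{2^\kappa-1}+1-\lambda)$, enumerate short initial segments, derive parity/divisibility constraints --- is a direction of attack, not an argument. Only the $\lambda=1$ case is actually settled by what you write (since $a_0=0$ forces $0,\,a_{2^k-1},\,2a_{2^k-1}$ to be a forbidden AP, so $a_{2^k}\ne 2a_{2^k-1}$). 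Nothing in your text rules out $3,5,9,11,15$.

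Beyond incompleteness, one concrete step is wrong. The identity you invoke,
\[
\lambda(A) \;=\; 1 - \rho(A) + 2\sum_{j=0}^{\kappa(A)-1} a_{2^j},
\]
rests on unfolding $a_{2^k-1}=a_{2^{k-1}}+a_{2^{k-1}-1}$ all the way down to $k=1$. But the recursion $a_{2^k+i}=a_{2^k}+a_i$ is only guaranteed by independence for $k\ge\kappa(A)$; for $k<\kappa(A)$ the initial segment $\{a_0,\ldots,a_{2^\kappa-1}\}$ is essentially arbitrary (subject to starting an independent sequence), and there is no reason $a_{2^{j}-1}=a_{2^{j-1}}+a_{2^{j-1}-1}$ should hold for $j\le\kappa$. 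So this ``global constraint'' cannot be used as stated.

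The constructive half is more promising. Your derivations $\lambda(A^d_k)=7a_{2^k}+\lambda(A)-d$ (from $\lambda(A^d_k)=2b+6c-d+1$ and $2b+1=c+\lambda(A)$) and $\lambda(A\otimes_k B)=\lambda(A)+a_{2^k}\lambda(B)$ both check out formally, and the block $[6c+2\lambda(A),\,7c+\lambda(A)-\omega(A)-1]$ of attainable $\lambda$-values grows linearly in $c=\alpha\cdot 3^k$, so a density/overlap argument in the spirit of the paper's proof of Theorem~\ref{thm:main}(ii) is credible for large $\lambda$. But you would still have to verify the overlap condition carefully (in particular that consecutive blocks as $k$ increments actually meet, which requires comparing $7c+\lambda-\omega$ against $6\cdot 3c+2\lambda$), and supply the finite catalogue for small $\lambda$. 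As written, this is a reasonable research plan, but it is not a proof of the conjecture, and no such proof appears in the paper.
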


Dependent Stanley sequences, which are described in \cite{rolnick}, follow Type 1 growth like independent sequences.  However, while independent sequences satisfy $a_{2^k}=\alpha\cdot 3^k$, dependent sequences satisfy $a_{2^k-\sigma}=\alpha\cdot 3^k+\beta\cdot 2^k$, where $\beta$ and $\sigma$ are constants. Rolnick conjectures that $\beta\ge 0$; further investigation is called for.

It appears very hard to prove that every Stanley sequence follows either Type 1 or Type 2 growth. Erd\H{o}s et al.~posed the weaker problem of showing that every Stanley sequence $\{a_n\}$ satisfies $a_n=\Omega(n^{1+\epsilon})$ for some $\epsilon>0$. This remains open.

\section{Acknowledgements}

This research was conducted at the University of Minnesota Duluth REU, funded by NSF Grant 1358659 and NSA Grant H98230-13-1-0273. The authors would like to thank Joseph Gallian for his advice, and Adam Hesterberg, Noah Arbesfeld, Daniel Kriz, and the other visitors and participants in the Duluth REU for their helpful discussions and valuable suggestions.

\bibliography{stanley}

\begin{thebibliography}{1}

\bibitem{erdos}
P.~Erd\H{o}s, V.~Lev, G.~Rauzy, C.~S\'{a}ndor, and A.~S\'{a}rk\"{o}szy.
\newblock Greedy algorithm, arithmetic progressions, subset sums and
  divisibility.
\newblock {\em Discrete Math.}, 200:119--135, 1999.

\bibitem{lindhurst}
S.~Lindhurst.
\newblock An investigation of several interesting sets of numbers generated by
  the greedy algorithm, 1990.
\newblock Senior thesis, Princeton University.

\bibitem{moy}
R.~A. Moy.
\newblock On the growth of the counting function of {Stanley} sequences.
\newblock {\em Discrete Math.}, 311:560--562, 2011.

\bibitem{stanley}
A.~M. Odlyzko and R.~P. Stanley.
\newblock Some curious sequences constructed with the greedy algorithm, 1978.
\newblock Bell Laboratories internal memorandum.

\bibitem{rolnick}
D.~Rolnick.
\newblock On the classification of {Stanley} sequences.
\newblock arXiv:1408.1940.

\bibitem{roth}
K.~F. Roth.
\newblock On certain sets of integers.
\newblock {\em J.~London Math.~Soc.}, 28:104--109, 1953.

\bibitem{sanders}
T.~Sanders.
\newblock On {Roth's} theorem on progressions.
\newblock {\em Ann.~of Math.}, 174:619--636, 2011.

\bibitem{savchev}
S.~Savchev and F.~Chen.
\newblock A note on maximal progression-free sets.
\newblock {\em Discrete Math.}, 306:2131--2133, 2006.

\end{thebibliography}
\bibliographystyle{plain}

\end{document}